\documentclass[12pt,a4paper]{amsart}

\usepackage[utf8]{inputenc}
\usepackage[T1]{fontenc}
\usepackage[english]{babel}
\usepackage{cite}

\usepackage{indentfirst}
\usepackage{amssymb}
\usepackage{amsfonts}
\usepackage{amsmath}
\usepackage{amsthm}
\usepackage[top=2.5cm, bottom=2.5cm, left=2.5cm, right=2.5cm]{geometry}
\usepackage{amsopn}
\usepackage[colorlinks]{hyperref}
\usepackage{mathrsfs}
\usepackage{amsxtra}
\usepackage{color}
\usepackage{dsfont}

\newcommand{\al}{\alpha}
\newcommand{\be}{\beta}
\newcommand{\si}{\sigma}
\newcommand{\om}{\omega}

\theoremstyle{plain}
\newtheorem{thm}{Theorem}

\newtheorem{lem}[thm]{Lemma}

\theoremstyle{definition}

\newtheorem*{example*}{Example}

\newtheorem*{rem*}{Remark}

\newcommand{\de}{\delta}
\newcommand{\Om}{\Omega}
\newcommand{\ga}{\gamma}
\newcommand{\Sd}{\mathbb{S}^{d-1}}

\newcommand{\R}{\mathbb{R}}

\DeclareMathOperator{\2F1}{_2F_1}

\DeclareSymbolFont{bbsymbol}{U}{bbold}{m}{n}
\DeclareMathSymbol{\ind}{\mathbin}{bbsymbol}{'061}
%%%%%%%%%%%%%%%%%%%%%%%%%%%%%%%%%%%%%%%%%%%%%%%%%%%%%%%%%%%%%%%%%%%%%%%%%%%%%%%%%%%%%%%%%%%%%%%%%%%%%%

\title[Best constants for Hardy inequalities in Triebel--Lizorkin spaces]{Best constants for Hardy inequalities in Triebel--Lizorkin spaces}
\author[M{.} Kijaczko]{Micha\l{} Kijaczko}

\keywords{fractional Sobolev space, Triebel--Lizorkin space, fractional Hardy inequality, Gagliardo seminorm, weight, sharp constant}
\subjclass[2020]{Primary 46E35; Secondary 39B72, 26D15}

\address[ M.K.]{Faculty of Pure and Applied Mathematics\\ Wroc{\l}aw University 
	of Science and Technology\\
	Wybrze\.ze Wyspia\'nskiego 27,
	50-370 Wroc{\l}aw, Poland
}
\email{michal.kijaczko@pwr.edu.pl}

\pagestyle{headings}
\begin{document}
\maketitle
\begin{abstract}
We find sharp constants in fractional Hardy inequalities for weighted Triebel--Lizorkin seminorms on the whole space and half-spaces. Our results generalize recently obtained weighted fractional Hardy inequalities for Gagliardo seminorms, but are new even for the unweighted case.    
\end{abstract}
\section{Introduction}
Hardy inequalities play a key role in such areas of research as mathematical analysis, partial differential equations, and stochastic processes. Among them, in recent years, fractional Hardy inequalities have attracted significant attention, due to their connection with important branches like nonlocal problems or theory of diffusions, where they arise from studies on Dirichlet forms. These inequalities are strictly related to the structure of \emph{fractional Sobolev spaces}. Recall that, for $0<s<1$, $p\geq 1$ and $\Omega\subset\R^d$ being an open set, this space is defined as
\begin{equation}\label{Wsp}
W^{s,p}(\Om)=\left\{u\in L^p(\Om)\colon\int_{\Om}\int_{\Om}\frac{|u(x)-u(y)|^p}{|x-y|^{d+sp}}\,dy\,dx<\infty\right\}.
\end{equation}
The space $W^{s,p}(\Om)$ is a Banach space endowed with the norm 
$$
\|f\|_{W^{s,p}(\Om)}=\left(\|f\|_{L^p(\Om)}+\int_{\Om}\int_{\Om}\frac{|u(x)-u(y)|^p}{|x-y|^{d+sp}}\,dy\,dx\right)^{\frac{1}{p}}.
$$
In this article, however, we deal with a variant of \eqref{Wsp}, called \emph{Triebel--Lizorkin space}. For $d\geq 1$, $\max\left\{0,\tfrac{d(p-q)}{pq}\right\}<s<1$, $p,q\geq 1$, the latter may be defined as 
$$
F^{s}_{p,q}(\Om)=\left\{f\in L^{\max\{p,q\}}(\Om)\colon \|f\|_{L^q(\Om)}+\int_{\Om}\left(\int_{\Om}\frac{|u(x)-u(y)|^p}{|x-y|^{d+sp}}\,dy\right)^{\frac{q}{p}}dx<\infty\right\},
$$
see \cite[Theorem 1.2, Theorem 1.3]{MR3667439}. The double integral appearing on the right-hand side will be refered to as \emph{Triebel--Lizorkin seminorm}. In recent years, Triebel--Lizorkin spaces have attracted considerable attention. For more information about (weighted) fractional Sobolev and Triebel--Lizorkin spaces, as well as fractional Hardy inequalities, we refer to \cite{MR4800921,MR2944369,MR4708667,MR4705882, MR4675974,MR4686398,MR3667439,MR4932272, MR4961027,MR4849884, MR4720167, MR1731740} and references therein. Noteworthy, Hardy inequalities in Triebel--Lizorkin spaces on general domains were studied in details by Ihnatsyeva and V\"ah\"akangas in \cite{MR3205532, MR3322431}.
\subsection{Hardy inequality with power weights on $\R^d$}
Our first result is the fractional Hardy inequality for weighted Triebel--Lizorkin seminorm on $\R^d$.
\begin{thm}\label{thm1}
Let $0<s<1$, $p\geq q$ and $d\geq 1$. Assume that the parameters $\al,\be$ satisfy
\begin{align}
\label{assumption1}\al<sq-(1-\tfrac{q}{p})d, &\text{\quad} \be<sp,\quad \al,\be>-d,\quad\al+\tfrac{q\be}{p}>-d.
\end{align}

Let $u\in C_c^1(\R^d)$ if $sq-\al-\tfrac{q\be}{p}<d$ and $u\in C_c^1(\R^d\setminus\{0\})$ if $sq-\al-\tfrac{q\be}{p}>d$. Then, the following Hardy inequality holds,
\begin{equation}\label{HardyTriebelLizorkin}
\int_{\R^d}\left(\int_{\R^d}\frac{|u(x)-u(y)|^p}{|x-y|^{d+sp}}|y|^{\be}\,dy\right)^{\frac{q}{p}}|x|^{\al}\,dx\geq \mathcal{C}^{p,q}_{d,s,\al,\be}\int_{\R^d}\frac{|u(x)|^q}{|x|^{sq-\al-\frac{q\be}{p}}}\,dx.
\end{equation}
The constant $\mathcal{C}^{p,q}_{d,s,\al,\be}$ is sharp and equals
\begin{equation}\label{CTriebelLizorkin}
\mathcal{C}^{p,q}_{d,s,\al,\be}=\left(\int_{0}^{1}t^{sp-1}\left(t^{-\left(\frac{p}{q}-1\right)d-\frac{p\al}{q}}+t^{-\be}\right)\left|1-t^{\left(d+\al+\frac{q\be}{p}-sq\right)/q}\right|^p\Phi_{d,s,p}(t)\,dt\right)^{\frac{q}{p}},
\end{equation}
where
\begin{equation}\label{definitionofPhi}
\Phi_{d,s,p}(t)=\begin{cases}
\om_{d-2}\displaystyle\int_{-1}^{1}\frac{\left(1-r^{2}\right)^{\frac{d-3}{2}}}{\left(1-2tr+t^2
  \right)^{\frac{d+sp}{2}}}\,dr,\,d\geq 2 \\
(1-t)^{-1-sp}+(1+t)^{-1-sp},\,d=1.
\end{cases}
\end{equation}
\end{thm}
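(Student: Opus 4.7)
The strategy is to reduce \eqref{HardyTriebelLizorkin} via polar coordinates to a one-dimensional weighted Hardy inequality for the radial profile of $u$, and to verify sharpness by testing the formal extremizer $|x|^{-\sigma}$. Concretely, for $x \in \R^d \setminus \{0\}$ with $r = |x|$, $\xi = x/|x|$, the substitution $y = rt\eta$ with $t > 0$, $\eta \in \Sd$ yields
\[
\int_{\R^d}\frac{|u(x)-u(y)|^p}{|x-y|^{d+sp}}|y|^{\be}\,dy
= r^{\be-sp}\int_0^\infty t^{d-1+\be}\int_{\Sd}\frac{|u(r\xi)-u(rt\eta)|^p}{(1-2t\xi\cdot\eta+t^2)^{(d+sp)/2}}\,d\sigma(\eta)\,dt.
\]
When $u(x) = w(|x|)$ is radial the angular integral collapses to $\Phi_{d,s,p}(t)$, and passing to polar coordinates on the outer integral reduces \eqref{HardyTriebelLizorkin} to a scale-invariant one-dimensional inequality in $r$. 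After $w(r) = r^{-\sigma}v(\log r)$ with $\sigma = (d+\al+q\be/p-sq)/q$ the problem becomes translation-invariant in $\tau = \log r$, and the splitting $\int_0^\infty dt = \int_0^1 + \int_1^\infty$ combined with $t \mapsto 1/t$ in the second piece---using $\Phi_{d,s,p}(1/t) = t^{d+sp}\Phi_{d,s,p}(t)$, immediate from \eqref{definitionofPhi}---collapses the kernel to the form appearing in \eqref{CTriebelLizorkin}, with $v \equiv 1$ the formal extremizer.

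\emph{Main obstacles.} Two steps require genuine care. First, one must reduce \eqref{HardyTriebelLizorkin} from general to radial $u$ without losing any constant. When $p = q$ this step is unnecessary: the outer $q/p$-power is trivial and \eqref{HardyTriebelLizorkin} becomes a weighted Gagliardo Hardy inequality for which a Frank--Seiringer-style ground-state substitution $u = |x|^{-\sigma}v$ gives the sharp constant directly through a pointwise nonnegative remainder. When $p > q$ the outer $q/p$-power is nonlinear and that argument breaks; the natural replacement is to use Minkowski's integral inequality for $p/q \geq 1$ combined with Jensen's inequality on $\Sd$ to compare $u$ with its spherical $L^p$-mean $w(r) = \bigl(|\Sd|^{-1}\int_{\Sd}|u(r\xi)|^p\,d\sigma(\xi)\bigr)^{1/p}$. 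Second, one must prove the one-dimensional inequality itself with the sharp constant; for $p = q$ this follows from the $L^p$ triangle inequality $\|v - \lambda\,v(\cdot+\tau_0)\|_{L^p} \geq |1-\lambda|\,\|v\|_{L^p}$ applied inside the inner integral and then integrating in $t$, while for $p > q$ one needs a 1D Triebel--Lizorkin-type Hardy inequality, which I would attack by adapting the ground-state/Dyda super-solution method to the translation-invariant setting, using a pointwise convexity inequality for $|a-b|^p$ that extracts $\mathcal{C}^{p,q}_{d,s,\al,\be}|v(\tau)|^q$ together with a nonnegative remainder.

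\emph{Sharpness.} Sharpness follows by evaluating \eqref{HardyTriebelLizorkin} on $u_\epsilon(x) = \phi_\epsilon(|x|)|x|^{-\sigma}$ where $\phi_\epsilon$ is a smooth cutoff supported in $[\epsilon,\epsilon^{-1}]$ when $sq-\al-q\be/p < d$, and in an annulus bounded away from $0$ in the regime $sq-\al-q\be/p > d$ (consistent with the admissibility condition $u \in C_c^1(\R^d \setminus\{0\})$ imposed there). By the polar identity of the first paragraph, both sides of \eqref{HardyTriebelLizorkin} diverge at the same logarithmic rate and their ratio tends to $\mathcal{C}^{p,q}_{d,s,\al,\be}$ as $\epsilon \to 0$, confirming that the constant is optimal.
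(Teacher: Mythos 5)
Your plan departs from the paper's proof at two essential steps, and both have gaps. The first is the proposed reduction to radial $u$: you suggest replacing $u$ by its spherical $L^p$-mean $w(r) = \bigl(|\Sd|^{-1}\int_{\Sd}|u(r\xi)|^p\,d\sigma\bigr)^{1/p}$ using "Minkowski combined with Jensen on $\Sd$." But the direction of Jensen is wrong when $p>q$: the outer exponent $q/p\le 1$ is concave, so Jensen gives
\[
\frac{1}{|\Sd|}\int_{\Sd}\Bigl(\cdots\Bigr)^{q/p}d\omega \;\le\; \Bigl(\frac{1}{|\Sd|}\int_{\Sd}\cdots\,d\omega\Bigr)^{q/p},
\]
an upper bound on the left-hand side of \eqref{HardyTriebelLizorkin}, whereas you need a lower bound in terms of the radialized quantity. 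Minkowski reorders iterated integrals but does not radialize a function, so neither tool supplies the comparison you claim. The paper in fact never reduces to radial test functions: it applies Hölder in the $\sigma$-variable on the sphere to lower bound the $L^p(\Sd)$ average by the $L^q(\Sd)$ average at the cost of a factor $\Phi(t)^{1-p/q}$, then applies Minkowski's integral inequality with exponent $p/q$ to commute the $t$-integral past the $(r,\omega)$-integral. The resulting bound holds for arbitrary $u$, sidestepping the radialization problem entirely.

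The second gap is in your plan for the one-dimensional (or translation-invariant) inequality: for $p>q$ you propose to "adapt the ground-state/super-solution method with a pointwise convexity inequality for $|a-b|^p$." The paper explicitly notes that the ground-state representation is unavailable when $p\neq q$, and its entire point is a new argument that avoids it. What actually closes the paper's proof is a second application of Minkowski — the $L^q$ triangle inequality, used with the dilations $r\mapsto rt$ and $r\mapsto r/t$ — which extracts $|1-t^{-\gamma/q}|^q\Phi(t)$ from $\int r^{\gamma-1}\int_{\Sd}\int_{\Sd}\frac{|u(r\omega)-u(rt\sigma)|^q}{|\omega - t\sigma|^{d+sp}}$, $\gamma = d+\alpha+\tfrac{q\beta}{p}-sq$. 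This is elementary but genuinely different from ground-state-plus-remainder, and it is what makes the $p>q$ case tractable.

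Your sharpness argument is closest in spirit to the paper's: truncations of $|x|^{-\delta}$ with $\delta=\gamma/q$, both sides logarithmically divergent. But when $\delta<0$ (i.e.\ $sq-\alpha-\tfrac{q\beta}{p}>d$) the formal extremizer blows up at infinity, and "a smooth cutoff in an annulus bounded away from $0$" does not by itself reproduce the double-logarithmic growth needed; the paper instead reduces this regime to $\delta>0$ via the inversion $T(x)=x/|x|^2$, which maps $(\alpha,\beta)$ to admissible parameters $(\alpha',\beta')$ with $\delta'=-\delta>0$ and preserves the constant. That substitution is worth having explicitly; otherwise the sharpness computation in the $\delta<0$ case is left incomplete.
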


Here and elsewhere, $\Sd$ is the unit sphere in $\R^d$ and $\om_{d-1}=\left|\mathbb{S}^{d-1}\right|=\frac{2\pi^{d/2}}{\Gamma\left(d/2\right)}$. It may be checked that the assumptions in \eqref{assumption1} enforce that the constant $\mathcal{C}^{p,q}_{d,s,\al,\be}$ and weighted Triebel--Lizorkin seminorm are finite, see Lemma \ref{Lemma}. 

In the unweighted case $\al=\be=0$ and $p=q=2$, the inequality \eqref{HardyTriebelLizorkin} was given by Herbst \cite{MR436854}, Beckner \cite{MR1254832} and Yafaev \cite{MR1717839}. For general $p=q\geq 1$, \eqref{HardyTriebelLizorkin} with generic constant was obtained by Maz'ya and Shaposhnikova \cite{MR1940355} and the sharp form of the constant was found by Frank and Seiringer in their seminal paper \cite{MR2469027}. In the weighted setting, \eqref{HardyTriebelLizorkin} was given in \cite{MR3626031} by  Abdellaoui and Bentifour for $sp<d$ and $\al=\be\in(\tfrac{sp-d}{2},0]$ and in \cite{MR4705882} by Dyda and Kijaczko for $\al,\be,\al+\be\in(-d,sp)$

For $p\neq q$, according to our best knowledge, no sharp results were previously known, even in the unweighted case. We remark here that inequality similar to \eqref{HardyTriebelLizorkin} on homogeneous Lie groups with more general weights and $1<p\leq q$, but without addressing the sharp constant, was recently obtained in \cite[Theorem 2.7]{MR4804831}. In this context, we also refer to \cite{MR1299571}.

\subsection{Hardy inequality on a half-space}
Our second result is the Hardy inequality for Triebel--Lizorkin seminorm on the half-space $\R^d_+=\{x\in\R^d\colon x_d>0\}$.
\begin{thm}\label{thm2}
Let $0<s<1$, $d\geq 1$, $p\geq q$. Assume that the parameters $\al,\be$ satisfy
\begin{align}
\label{assumption2halfspace}\al<sq-(1-\tfrac{q}{p}), &\text{\quad} \be<sp,\quad \al,\be>-1,\quad\al+\tfrac{q\be}{p}>-1.
\end{align}

Let $u\in C_c^1\left(\overline{\R^d_+}\right)$ if $sq-\al-\tfrac{q\be}{p}<1$ and $u\in C_c^1(\R^d_+)$ if $sq-\al-\tfrac{q\be}{p}>1$. Then, the following Hardy inequality holds,
\begin{equation}\label{half-space}
 \int_{\R^d_+}\left(\int_{\R^d_+}\frac{|u(x)-u(y)|^p}{|x-y|^{d+sp}}y_d^{\be}\,dy\right)^{\frac{q}{p}}x_d^{\al}\,dx\geq \mathcal{D}^{p,q}_{d,s,\al,\be}\int_{\R^d}\frac{|u(x)|^q}{x_d^{sq-\al-\frac{q\be}{p}}}\,dx.
\end{equation}
The constant $\mathcal{D}^{p,q}_{d,s,\al,\be}$ is sharp and equals
\begin{equation}\label{DTriebelLizorkin}
\mathcal{D}^{p,q}_{d,s,\al,\be}=\left(\frac{\pi^{\frac{d-1}{2}}\Gamma\left(\frac{1+sp}{2}\right)}{\Gamma\left(\frac{d+sp}{2}\right)}\int_{0}^{1}\frac{t^{sp-1}\left(t^{-\be}+t^{-\frac{p\al}{q}-\frac{p}{q}+1}\right)\left|1-t^{(sq-\al-\frac{q\be}{p}-1)/q}\right|^p}{(1-t)^{1+sp}}\,dt\right)^{\frac{q}{p}}.
\end{equation}
\end{thm}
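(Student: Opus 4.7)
The plan is to adapt the virtual ground state argument used in the proof of Theorem~\ref{thm1} to the half-space, taking advantage of the fact that on $\R^d_+$ the $(d-1)$ horizontal directions of the inner integral can be handled in closed form. The relevant virtual ground state is $w(x) = x_d^{-\gamma}$ with $\gamma = (sq-\al-q\be/p-1)/q$, matching the exponent in the $|1-t^\gamma|^p$ factor of $\mathcal{D}^{p,q}_{d,s,\al,\be}$.

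First, I compute both sides of \eqref{half-space} for $u=w$. Substituting $y_d=t\,x_d$ and $y'=x'+x_d\,z'$, the inner integral factors as
\[
\int_{\R^d_+}\frac{|w(x)-w(y)|^p}{|x-y|^{d+sp}}\,y_d^{\be}\,dy = x_d^{\be-sp-p\gamma}\int_0^\infty \frac{t^{\be}\,|1-t^{-\gamma}|^p}{|1-t|^{1+sp}}\,dt\cdot\int_{\R^{d-1}}\frac{dz'}{(1+|z'|^2)^{(d+sp)/2}}.
\]
The $z'$-integral evaluates via the beta-function identity to $\pi^{(d-1)/2}\Gamma((1+sp)/2)/\Gamma((d+sp)/2)$, and the $t$-integral, after splitting $(0,\infty)=(0,1)\cup(1,\infty)$ and substituting $t\mapsto 1/t$ on the second piece, produces exactly the one-dimensional integral appearing in \eqref{DTriebelLizorkin}. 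Raising to the $q/p$-th power, multiplying by $x_d^{\al}$, and integrating (formally) shows that the ratio of the two sides equals $\mathcal{D}^{p,q}_{d,s,\al,\be}$.

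Next, for general $u$ I establish the lower bound via a Frank--Seiringer-type argument: writing $u=w\,v$ and applying a pointwise inequality for $|w(x)v(x)-w(y)v(y)|^p$, then integrating in $y$ against $|x-y|^{-(d+sp)}y_d^{\be}$ and using the computation above, one obtains
\[
\int_{\R^d_+}\frac{|u(x)-u(y)|^p}{|x-y|^{d+sp}}\,y_d^{\be}\,dy \geq \bigl(\mathcal{D}^{p,q}_{d,s,\al,\be}\bigr)^{p/q}\frac{|u(x)|^p}{x_d^{sp-\be}}+R(x),
\]
where $R(x)\ge 0$ vanishes precisely when $u\propto w$. Since $q/p>0$, the remainder can be discarded after taking the $(q/p)$-th power, and multiplying by $x_d^{\al}$ and integrating yields \eqref{half-space}. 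Sharpness follows by choosing a test sequence $u_n(x)=\phi_n(x_d)\psi_n(x')\,x_d^{-\gamma}$, where $\phi_n$ smoothly truncates $x_d^{-\gamma}$ between vanishing/diverging scales and $\psi_n$ is a horizontal bump, and applying dominated convergence to the remainder together with the explicit computation of the first step.

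The principal obstacle is the lower bound in the case $p\neq q$. For $p=q$ the Frank--Seiringer pointwise inequality yields the Hardy inequality directly, but for $p>q$ the outer $L^{q/p}$-integration is concave, so one must arrange the pointwise bound so that the leading term retains the sharp coefficient and the remainder $R(x)$ is genuinely nonnegative after the concave rearrangement; this is exactly where the assumption $p\ge q$ enters. A secondary subtlety is that the sign of $\gamma$ (equivalently, the dichotomy $sq-\al-q\be/p\lessgtr 1$) governs whether $w$ vanishes or blows up at $\{x_d=0\}$, which explains the two different test-function classes $C_c^1\bigl(\overline{\R^d_+}\bigr)$ and $C_c^1(\R^d_+)$ appearing in the hypotheses.
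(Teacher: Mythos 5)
The central step of your proposal—the claimed pointwise inequality
\begin{equation*}
\int_{\R^d_+}\frac{|u(x)-u(y)|^p}{|x-y|^{d+sp}}\,y_d^{\be}\,dy \geq \bigl(\mathcal{D}^{p,q}_{d,s,\al,\be}\bigr)^{p/q}\frac{|u(x)|^p}{x_d^{sp-\be}}+R(x),\qquad R(x)\ge 0,
\end{equation*}
is false, and the gap is structural rather than a technicality that the hypothesis $p\ge q$ will absorb. A pointwise Hardy inequality of this form cannot hold: fix any $x_0\in\R^d_+$ and let $u=u_n$ be a smooth cutoff that equals $1$ on $B_n$ and is supported in $B_{2n}$. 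As $n\to\infty$ the left-hand side at $x_0$ behaves like $\int_{|y|\gtrsim n}y_d^{\be}|y|^{-d-sp}\,dy\to 0$ (using $\be<sp$), while the right-hand side stays equal to the fixed positive number $(\mathcal{D}^{p,q}_{d,s,\al,\be})^{p/q}x_{0,d}^{\be-sp}$. So no pointwise remainder $R\ge 0$ can rescue the estimate. The Frank--Seiringer ground-state representation is an \emph{integrated} identity: the cancellation that makes the remainder a genuine Dirichlet-type form only appears after integrating in both $x$ and $y$ and symmetrizing. It does not descend to the inner $y$-integral, which is precisely what one would need in order to take the $(q/p)$-th power afterward. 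This is why the paper explicitly remarks that for $p\neq q$ the ground-state machinery is unavailable, and why its proof proceeds along a completely different route.

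The paper's actual argument for Theorem~\ref{thm2} is a sequence of Hölder and Minkowski steps with no remainder term at all. After the dilation $y_d\mapsto x_dy_d$ and $x',y'\mapsto x_dx',x_dy'$, Hölder's inequality in $y'$ replaces the $L^p$ difference quotient by the $L^q$ one at the cost of the explicit factor $\mathcal{D}^{1-p/q}(x_d|1-y_d|)^{(1+sp)(p/q-1)}$ coming from \cite[(6.2.1)]{MR1225604}; Minkowski's integral inequality (exponent $p/q\ge 1$) then moves the $y_d$-integration outside; and Minkowski's triangle inequality in $L^q$, applied to $u(x'x_d,x_d)$ versus $u(y'x_d,x_dy_d)$, produces the factor $|1-y_d^{(d-2-\ga)/q}|^q$. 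Integrating in $y_d$ and rearranging yields the constant $\mathcal{D}^{p,q}_{d,s,\al,\be}$. This is why $p\ge q$ is used: the Hölder step needs $p/q\ge 1$ to go in the correct direction, and Minkowski's integral inequality requires exponent $\ge 1$. Your sharpness discussion (product test functions $\psi_N(x)=\varphi_N(x')\eta(x_d)$ to collapse to the one-dimensional problem, combined with the logarithmically divergent sequence for $d=1$) does agree with the paper's argument, but the lower bound itself must be reworked along the Hölder--Minkowski lines rather than via a pointwise ground-state inequality.
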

Again, by Lemma \ref{Lemma} below, \eqref{assumption2halfspace} implies that the constant $\mathcal{D}^{p,q}_{d,s,\al,\be}$ and weighted Triebel--Lizorkin seminorm are finite. When $p=q$, the sharp constant in \eqref{half-space} in the unweighted case $\al=\be=0$ was found  by Bogdan and Dyda in \cite{MR2663757} for $p=2$ and Frank and Seiringer in \cite{MR2723817} for general $p\geq 1$. The weighted result was established by Dyda and Kijaczko in \cite{MR4705882}.

We point that, for $p=q$, fractional Hardy inequalities with sharp constants are derived by means of the ground-state representation, see \cite{MR2469027}. However, for $p\neq q$, no such tools are available. Therefore, our proofs of Theorems \ref{thm1} and \ref{thm2} are completely new even for the case $p=q$ and do not require ground-state representation. 

Unfortunately, our proofs of Theorems \ref{thm1} and \ref{thm2} do not work for $p<q$. Therefore, it is an open problem if the constants \eqref{CTriebelLizorkin} and \eqref{DTriebelLizorkin} are sharp also in the case of $p<q$.\\
\\
\noindent\textbf{Notation}. Through the paper, we will widely use the comparison $a\lesssim b$ or $a\approx b$ to mark that $a\leq cb$ or $c_1b\leq a\leq c_2b$, respectively. The constants appearing in such inequalities will be unimportant and independent on functions or varying parameters.
\section{Proofs}
We start with a technical lemma, which allows us to state under which conditions on parameters the weighted Triebel--Lizorkin seminorms, we are dealing with, are finite. The proof is inspired by \cite[Proof of Lemma 2.1]{MR3420496}, or \cite[Proof of Lemma 2.1]{MR4849884}.
\begin{lem}\label{Lemma}
Let $1\leq p,q<\infty$, $0<s<1$. The following ensure that weighted Triebel--Lizorkin seminorms are finite.\\
\\
(1) Suppose that $\al$ and $\beta$ satisfy \eqref{assumption1}. Then, for $u\in C_c^1\left(\R^d\right)$,
$$
\int_{\R^d}\left(\int_{\R^d}\frac{|u(x)-u(y)|^p}{|x-y|^{d+sp}}|y|^{\be}\,dy\right)^{\frac{q}{p}}|x|^{\al}\,dx<\infty.
$$
(2) Suppose that $\al$ and $\beta$ satisfy \eqref{assumption2halfspace}. Then, for $u\in C_c^1\left(\R^d_+\right)$,
$$
\int_{\R^d_+}\left(\int_{\R^d_+}\frac{|u(x)-u(y)|^p}{|x-y|^{d+sp}}y_d^{\be}\,dy\right)^{\frac{q}{p}}x_d^{\al}\,dx<\infty.
$$
\end{lem}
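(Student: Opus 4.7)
The plan is to derive a pointwise upper bound on the inner integral and then check its integrability against the outer weight. The strategy is identical for parts (1) and (2), so I would describe (1) in full detail and indicate the modifications for (2) at the end.

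Fix $u\in C_c^1(\R^d)$, and choose $R,L>0$ so that $\supp u \subset B(0,R)$ and $|u(x)-u(y)|\le L|x-y|$ for all $x,y$. Writing
$$
I(x) := \int_{\R^d}\frac{|u(x)-u(y)|^p}{|x-y|^{d+sp}}|y|^{\be}\,dy,
$$
I split the $y$-integration at $|x-y|=1$. On the near part, the Lipschitz bound and the substitution $z=y-x$ reduce matters to controlling $\int_{|z|\le 1}|z|^{p(1-s)-d}|x+z|^{\be}\,dz$; a dyadic decomposition separating the singularities at $z=0$ and $z=-x$, each integrable thanks to $p(1-s)>0$ and $\be>-d$, produces the bound $\lesssim |x|^{\be+p(1-s)}+1$ for $|x|\le R+1$. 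On the far part one has $|u(x)-u(y)|\le 2\|u\|_\infty$ and the integrand vanishes unless $x$ or $y$ lies in $\supp u$; the conditions $\be>-d$ and $\be<sp$ then yield a uniform bound for $|x|\le R+1$ and an $|x|^{-d-sp}$ tail for $|x|>R+1$.

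Raising these pointwise bounds to the $q/p$-th power and integrating $|x|^{\al}$ against them, local integrability at $x=0$ requires $\al>-d$ together with $\al+\tfrac{q\be}{p}>-d-q(1-s)$, the latter implied by the assumed $\al+\tfrac{q\be}{p}>-d$; integrability at infinity requires $\al<sq-d(1-\tfrac{q}{p})$. These coincide exactly with \eqref{assumption1}. Part (2) goes through identically, with the weight $y_d^{\be}$ replacing $|y|^{\be}$, the boundary coordinate $x_d$ replacing $|x|$, and the one-dimensional local threshold $\be>-1$ taking the place of $\be>-d$; the exponents and overall structure of the pointwise bound are unchanged.

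The main technical point is the dyadic separation of the two singularities in $\int_{|z|\le 1}|z|^{p(1-s)-d}|x+z|^{\be}\,dz$: if one fails to isolate the singularity at $z=0$ from that at $z=-x$, the resulting bound loses the correct power of $|x|$ and no longer matches the endpoint conditions in \eqref{assumption1}. This is also the step where the assumption $\be>-d$ (respectively $\be>-1$ in part (2)) is used in an essential way.
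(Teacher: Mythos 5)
Your proof is correct, and it takes a genuinely different route from the one in the paper. The paper splits the \emph{outer} $x$-integral into $B_{2R}$ and $B_{2R}^c$, and for the local part reduces to $\int_{B(x,4R)}\frac{|y|^\be}{|x-y|^{d-(1-s)p}}\,dy$, which it controls by invoking Hedberg's convolution lemma together with the fact that $|y|^\be$ is a Muckenhoupt $A_1$ weight for $-d<\be\le 0$ (and similarly $|y_d|^\be$ for $-1<\be\le 0$ in part (2)), yielding the pointwise bound $\lesssim R^{(1-s)p}|x|^\be$. You instead split the \emph{inner} $y$-integral at $|x-y|=1$ and replace the $A_1$/Hedberg machinery by a direct dyadic decomposition around the two singularities $z=0$ and $z=-x$; this is more elementary and self-contained, avoids the external references entirely, and in fact produces the slightly sharper local bound $\lesssim |x|^{\be+p(1-s)}+1$. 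Correspondingly, the condition you actually need at the origin is $\al+\tfrac{q\be}{p}>-d-q(1-s)$ together with $\al>-d$, which is a bit weaker than the assumed $\al+\tfrac{q\be}{p}>-d$; you noted this, so the phrase ``coincide exactly'' should be softened, though it is harmless since the lemma only asserts sufficiency. For part (2) the singular set of the weight is the hyperplane $\{z_d=-x_d\}$ rather than a point, so the dyadic decomposition must be adapted (annuli in $|z_d+x_d|$ rather than $|z+x|$); this is not literally ``unchanged'', but the paper itself dispatches part (2) with ``follows the same pattern'', so the level of detail is comparable.
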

\begin{proof}
We will only prove the first part of the Lemma, since the second follows the same pattern. Let $u\in C_c^1\left(\R^d\right)$ and let $\text{supp}\,u\subset B_R$, where $B_R=\{x\in\R^d\colon |x|\leq R\}$. We have
\begin{align*}
\int_{\R^d}&\left(\int_{\R^d}\frac{|u(x)-u(y)|^p}{|x-y|^{d+sp}}|y|^{\be}\,dy\right)^{\frac{q}{p}}|x|^{\al}\,dx\\
&=\int_{B_{2R}}\left(\int_{\R^d}\frac{|u(x)-u(y)|^p}{|x-y|^{d+sp}}|y|^{\be}\,dy\right)^{\frac{q}{p}}|x|^{\al}\,dx\\
&+\int_{B_{2R}^c}\left(\int_{B_R}\frac{|u(y)|^p}{|x-y|^{d+sp}}|y|^{\be}\,dy\right)^{\frac{q}{p}}|x|^{\al}\,dx\\
&=:I_1+I_2.
\end{align*}
It holds
\begin{align*}
 I_1&\leq\int_{B_{2R}}\left(\int_{B_{2R}}\frac{|u(x)-u(y)|^p}{|x-y|^{d+sp}}|y|^{\be}\,dy\right)^{\frac{q}{p}}|x|^{\al}\,dx\\
 &\quad+\int_{B_{R}}\left(\int_{B_{2R}^c}\frac{|u(x)|^p}{|x-y|^{d+sp}}|y|^{\be}\,dy\right)^{\frac{q}{p}}|x|^{\al}\,dx\\
 &=:J_1+J_2.
\end{align*}
Let us focus on $J_1$. If $\be$ is nonnegative, we use the bound $|y|^{\be}\leq(2R)^{\be}$ and the proof is easier, so we omit this part. Hence, assume that $\be<0$. Since $|u(x)-u(y)|\lesssim|x-y|$,
\begin{align*}
J_1&\lesssim\int_{B_{2R}}\left(\int_{B_{2R}}\frac{|x-y|^p}{|x-y|^{d+sp}}|y|^{\be}\,dy\right)^{\frac{q}{p}}|x|^{\al}\,dx\\
&\leq\int_{B_{2R}}\left(\int_{B(x,4R)}\frac{|y|^{\be}}{|x-y|^{d-(1-s)p}}\,dy\right)^{\frac{q}{p}}|x|^{\al}\,dx.
\end{align*}
For $u\in L_{loc}^1(\R^d)$, let 
$$
Mu(x)=\sup_{r>0}r^{-d}\int_{B(x,r)}|u(y)|\,dy
$$
be the Hardy--Littlewood maximal operator. It is well known that, if $w$ belongs to the Muckenhoupt class $A_1$, then $Mw(x)\lesssim w(x)$. By \cite[Theorem 1.1]{MR3900847}, the function $w(y)=|y|^{\be}\in A_1$ if and only if $-d<\be\leq 0$. Hence, by the first lemma in \cite{MR312232},
\begin{align*}
    \int_{B(x,4R)}\frac{|y|^{\be}}{|x-y|^{d-(1-s)p}}\,dy\lesssim R^{(1-s)p}|x|^{\be}.
\end{align*}
Consequently, the above estimate yields
$$
J_1\lesssim\int_{B_{2R}}|x|^{\al+\frac{q\be}{p}}\,dx<\infty,
$$
provided that $\al+\frac{q\be}{p}>-d$. Notice that if $\be$ is nonnegative, we obtain the bound $\al>-d$.

We now estimate the integral $J_2$, proceeding as follows. We have $|y|\geq 2R\geq2|x|$, which gives $|y-x|\geq|y|-|x|\geq\tfrac{|y|}{2}$. Therefore,
\begin{align*}
J_2\lesssim \|u\|_{\infty}^{q}\int_{B_{R}}\left(\int_{B_{2R}^c}\frac{dy}{|y|^{d+sp-\be}}\right)^{\frac{q}{p}}|x|^{\al}\,dx<\infty,
\end{align*}
provided that $\be<sp$. 
%If $\be$ is negative, we use the following arguments. Observe that, since $|x|\leq 2R$ and $|y|\geq 3R$, it holds $|x-y|\geq |y|-|x|\geq R$. Thus, again by \cite{MR312232},
%\begin{align*}
%J_2&\lesssim\|u\|_{\infty}^{q}\int_{B_{2R}}\left(\int_{B(x,3R)^{c}}\frac{|y|^{\be}}{|x-y|^{d+sp}}\,dy\right)^{\frac{q}{p}}|x|^{\al}\,dx\\
%&\lesssim\int_{B_{2R}}|x|^{\al+\frac{q\be}{p}}\,dx<\infty.
%\end{align*}

We now go back to the integral $I_2$. If $|x|\geq 2R$ and $|y|\leq R$, we have $|y-x|\geq |x|-|y|\geq \frac{1}{2}|x|$. It than holds
\begin{align*}
I_2&\lesssim\int_{B_{2R}^c}\left(\int_{B_R}\frac{|y|^{\be}}{|x-y|^{d+sp}}\,dy\right)^{\frac{q}{p}}|x|^{\al}\,dx\\
&\lesssim\int_{B_{2R}^c}\frac{dx}{|x|^{\frac{qd}{p}+sq-\al}}\left(\int_{B_{R}}|y|^{\be}\,dy\right)^{\frac{q}{p}}<\infty,
\end{align*}
if $\al<sq-\left(1-\tfrac{q}{p}\right)d$ and $\be>-d$. Therefore, the proof is complete.
\end{proof}

\begin{proof}[Proof of Theorem \ref{thm1}]
Using polar coordinates, we have
\begin{align*}
\int_{\R^d}&\left(\int_{\R^d}\frac{|u(x)-u(y)|^p}{|x-y|^{d+sp}}|y|^{\be}\,dy\right)^{\frac{q}{p}}|x|^{\al}\,dx\\
&=\int_{0}^{\infty}\int_{\Sd}r^{d-1+\al}\,d\omega\,dr\left(\int_{0}^{\infty}\int_{\Sd}\frac{\left|u(r\om)-u(t\si)\right|^p}{\left|r\om-t\si\right|^{d+sp}}t^{d-1+\be}\,d\si\,dt\right)^{\frac{q}{p}}\\
&=\int_{0}^{\infty}\int_{\Sd}r^{d-1+\al+q\be/p-sq}\,d\om\,dr\left(\int_{0}^{\infty}\int_{\Sd}\frac{\left|u(r\om)-u(rt\si)\right|^p}{\left|\om-t\si\right|^{d+sp}}t^{d-1+\be}\,d\si\,dt\right)^{\frac{q}{p}}.
\end{align*}
where the last equality above follows by substitution $t\mapsto rt$ in the inner integral. By Hölder's inequality, we have
\begin{equation}\label{holder}
\int_{\Sd}\frac{\left|u(r\om)-u(rt\si)\right|^p}{\left|\om-t\si\right|^{d+sp}}\,d\si\geq\left(\int_{\Sd}\frac{\left|u(r\om)-u(rt\si)\right|^q}{\left|\om-t\si\right|^{d+sp}}\,d\si\right)^{\frac{p}{q}}\left(\int_{\Sd}\frac{d\si}{\left|\om-t\si\right|^{d+sp}}\,d\si\right)^{1-\frac{p}{q}}.
\end{equation}
By rotation invariance, the last integral is independent on $\om$ and it holds
\begin{align*}
 \int_{\Sd}\frac{d\si}{\left|\om-t\si\right|^{d+sp}}&=\int_{\Sd}\frac{d\si}{\left|e_d-t\si\right|^{d+sp}}=\int_{\Sd}\frac{d\si}{\left(1-2t\si_d+t^2\right)^{\frac{d+sp}{2}}}\\
 &=\om_{d-2}\int_{-1}^{1}\frac{\left(1-u^2\right)^{\frac{d-3}{2}}}{\left(1-2tu+t^2\right)^{\frac{d+sp}{2}}}\,du=\Phi_{d,s,p}(t)=:\Phi(t),
\end{align*}
where $\Phi$ is given by \eqref{definitionofPhi} and $e_d=(0,0,\dots,0,1)$. 

Denote $\ga=d+\al+\frac{q\be}{p}-sq$. Therefore, applying \eqref{holder},
\begin{align*}
\int_{0}^{\infty}&\int_{\Sd}r^{\ga-1}\,d\om\,dr\left(\int_{0}^{\infty}\int_{\Sd}\frac{\left|u(r\om)-u(rt\si)\right|^p}{\left|\om-t\si\right|^{d+sp}}t^{d-1+\be}\,d\si\,dt\right)^{\frac{q}{p}}\\
&\geq\int_{0}^{\infty}\int_{\Sd}r^{\ga-1}\,d\om\,dr\left(\int_{0}^{\infty}\left(\int_{\Sd}\frac{\left|u(r\om)-u(rt\si)\right|^q}{\left|\om-t\si\right|^{d+sp}}\,d\si\right)^{\frac{p}{q}}t^{d-1+\be}\Phi(t)^{1-\frac{p}{q}}\,dt\right)^{\frac{q}{p}}.
\end{align*}
Recall the Minkowski's integral inequality
$$
\left(\int_{X}\left|\int_{Y}F(x,y)\,dy\right|^{\eta}dx\right)^{1/\eta}\leq\int_{Y}\left(\int_{X}|F(x,y)|^{\eta}\,dx\right)^{1/\eta}dy.
$$
We apply it for $\eta=\frac{p}{q}$, $X=(0,\infty)$, $dx=\Phi(t)^{1-\frac{p}{q}}t^{d-1+\be}\,dt$, $Y=(0,\infty)\times\Sd$, $dy=r^{\ga-1}\,d\om\,dr$ and $F(r,t,\om)=\displaystyle\int_{\Sd}\frac{\left|u(r\om)-u(rt\si)\right|^q}{\left|\om-t\si\right|^{d+sp}}\,d\si$. Hence,
\begin{align}\label{jjjj}
\nonumber\int_{0}^{\infty}&\int_{\Sd}r^{\ga-1}\,d\om\,dr\left(\int_{0}^{\infty}\left(\int_{\Sd}\frac{\left|u(r\om)-u(rt\si)\right|^q}{\left|\om-t\si\right|^{d+sp}}\,d\si\right)^{\frac{p}{q}}t^{d-1+\be}\Phi(t)^{1-\frac{p}{q}}\,dt\right)^{\frac{q}{p}}\\   
&\geq\left(\int_{0}^{\infty}\Phi(t)^{1-\frac{p}{q}}t^{d-1+\be}\,dt\left(\int_{0}^{\infty}\int_{\Sd}\int_{\Sd}r^{\ga-1}\frac{\left|u(r\om)-u(rt\si)\right|^q}{\left|\om-t\si\right|^{d+sp}}\,d\si\,d\om\,dr\right)^{\frac{p}{q}}\right)^{\frac{q}{p}}.
\end{align}    
Observe that, since $|\om-t\si|=|\si-t\om|$, we have
\begin{align*}
\int_{0}^{\infty}\int_{\Sd}\int_{\Sd}r^{\ga-1}\frac{\left|u(r\om)\right|^q}{\left|\om-t\si\right|^{d+sp}}\,d\si\,d\om\,dr&=\Phi(t)\int_{\Sd}\int_{0}^{\infty}r^{\ga-1}|u(r\om)|^q\,dr\,d\om\\
&=\Phi(t)\int_{\R^d}\frac{|u(x)|^q}{|x|^{sq-\al-q\be/p}}\,dx.  
\end{align*}
Using Minkowski's inequality we can estimate
\begin{align}\label{rrrrr}
\nonumber\left(\Phi(t)\int_{\R^d}\frac{|u(x)|^q}{|x|^{sq-\al-q\be/p}}\,dx\right)^{1/q}&=\left(\int_{0}^{\infty}\int_{\Sd}\int_{\Sd}r^{\ga-1}\frac{\left|u(r\om)\right|^q}{\left|\om-t\si\right|^{d+sp}}\,d\si\,d\om\,dr\right)^{1/q}\\
\nonumber&\leq\left(\int_{0}^{\infty}\int_{\Sd}\int_{\Sd}r^{\ga-1}\frac{\left|u(r\om)-u(rt\si)\right|^q}{\left|\om-t\si\right|^{d+sp}}\,d\si\,d\om\,dr\right)^{1/q}\\
\nonumber&\quad+\left(\int_{0}^{\infty}\int_{\Sd}\int_{\Sd}r^{\ga-1}\frac{\left|u(rt\si)\right|^q}{\left|\om-t\si\right|^{d+sp}}\,d\si\,d\om\,dr\right)^{1/q}\\
\nonumber&=\left(\int_{0}^{\infty}\int_{\Sd}\int_{\Sd}r^{\ga-1}\frac{\left|u(r\om)-u(rt\si)\right|^q}{\left|\om-t\si\right|^{d+sp}}\,d\si\,d\om\,dr\right)^{1/q}\\
&\quad+t^{-\ga/q}\left(\Phi(t)\int_{\R^d}\frac{|u(x)|^q}{|x|^{sq-\al-q\be/q}}\,dx\right)^{1/q}.
\end{align}
Similarly,
\begin{align}\label{qqqq}
\nonumber\left(\Phi(t)\int_{\R^d}\frac{|u(x)|^q}{|x|^{sq-\al-q\be/p}}\,dx\right)^{1/q}&=t^{\ga/q}\left(\int_{0}^{\infty}\int_{\Sd}\int_{\Sd}r^{\ga-1}\frac{\left|u(rt\si)\right|^q}{\left|\om-t\si\right|^{d+sp}}\,d\si\,d\om\,dr\right)^{1/q}\\ 
\nonumber&\quad\leq t^{\ga/q}\left(\int_{0}^{\infty}\int_{\Sd}\int_{\Sd}r^{\ga-1}\frac{\left|u(rt\si)-u(r\om)\right|^q}{\left|\om-t\si\right|^{d+sp}}\,d\si\,d\om\,dr\right)^{1/q}\\
&\quad+t^{\ga/q}\left(\Phi(t)\int_{\R^d}\frac{|u(x)|^q}{|x|^{sq-\al-q\be/p}}\,dx\right)^{1/q}.
\end{align}
Thereofore, combining \eqref{rrrrr} with \eqref{qqqq}, we obtain 
$$
\int_{0}^{\infty}\int_{\Sd}\int_{\Sd}r^{\ga-1}\frac{\left|u(r\om)-u(rt\si)\right|^q}{\left|\om-t\si\right|^{d+sp}}\,d\si\,d\om\,dr\geq\left|1-t^{-\ga/q}\right|^q\Phi(t)\int_{\R^d}\frac{|u(x)|^q}{|x|^{sq-\al-q\be/p}}\,dx.
$$
Thus, \eqref{jjjj} is bigger or equal to 
$$
\int_{\R^d}\frac{|u(x)|^q}{|x|^{sq-\al-q\be/p}}\,dx \left(\int_{0}^{\infty}t^{d-1+\be}\left|1-t^{-\ga/q}\right|^p\Phi(t)\,dt\right)^{\frac{q}{p}}.
$$
We now divide the integral over $t$ into integrals over $(0,1)$ and $(1,\infty)$, substitute $t\mapsto\tfrac{1}{t}$ in the second integral and rearrange. Consequently, \eqref{HardyTriebelLizorkin} follows. 

It remains to verify the optimality of the constant \eqref{CTriebelLizorkin}. To this end, let 
$\de=\frac{d+\al+\frac{q\be}{p}-sq}{q}=\frac{\ga}{q}
$
and suppose first that $\de>0$. We want to construct a family of functions which approximate $r\mapsto r^{-\de}$, for $r>0$. To this end, for $n\in\mathbb{N}$, $n\geq 2$, we define the functions 
\begin{equation}\label{u_n}
u_n(r)=\begin{cases}
        n^{\de}-n^{-\de} & \text{ for }0\leq r\leq\tfrac{1}{n}, \\ 
        r^{-\de}-n^{-\de} & \text{ for }\tfrac{1}{n}<r\leq n\, \\
        0 &\text{ for }r>n.
    \end{cases}
\end{equation}
%Observe that
%\begin{equation}\label{Lipschitz}
% \left|u_n(r)-u_n(t)\right|\leq\min\left\{\de n^{\de+1}|r-t|, n^{\de}-n^{-\de}\right\}.   
%\end{equation}
We also denote $u_n(x)=u_n(|x|)$ for $x\in\R^d$. We have
\begin{align}\label{log}
\nonumber\int_{\R^d}\frac{|u_n(x)|^q}{|x|^{sq-\al-q\be/p}}\,dx&=\om_{d-1}\left[\left(n^{\de}-n^{-\de}\right)^q\int_{0}^{\frac{1}{n}}r^{q\de-1}\,dr+\int_{\frac{1}{n}}^{n}\left(r^{-\de}-n^{-\de}\right)^qr^{q\de-1}\,dr\right]\\
\nonumber&=\om_{d-1}\left[\frac{1}{q\de}\left(1-n^{-2\de}\right)^q+\int_{\frac{1}{n^2}}^1\left(1-t^{\de}\right)^q\frac{dt}{t}\right]\\
&=\om_{d-1}\left(2\log n+O(1)\right),\quad n\rightarrow\infty.
\end{align}
Moreover, using polar coordinates, similarly as in the first part of the proof (notice that \eqref{holder} becomes an equality for radial functions),
\begin{align*}
\int_{\R^d}&\left(\int_{\R^d}\frac{|u_n(x)-u_n(y)|^p}{|x-y|^{d+sp}}|y|^{\be}\,dy\right)^{\frac{q}{p}}|x|^{\al}\,dx\\
&=\om_{d-1}\int_{0}^{\infty}r^{q\de-1}\,dr\,\left(\int_{0}^{\infty}\left|u_n(r)-u_n(rt)\right|^pt^{d-1+\be}\Phi(t)\,dt\right)^{\frac{q}{p}}\\
&=:\om_{d-1}\left(K_1+K_2+K_3\right),
\end{align*}
where
\begin{align*}
K_1&=\int_{0}^{\frac{1}{n}}r^{q\de-1}\,dr\left(\int_{0}^{\infty}\left|n^{\de}-n^{-\de}-u_n(rt)\right|^pt^{d-1+\be}\Phi(t)\,dt\right)^{\frac{q}{p}},\\
K_2&=\int_{\frac{1}{n}}^{n}r^{q\de-1}\,dr\left(\int_{0}^{\infty}\left|r^{-\de}-n^{-\de}-u_n(rt)\right|^pt^{d-1+\be}\Phi(t)\,dt\right)^{\frac{q}{p}},\\
K_3&=\int_{n}^{\infty}r^{q\de-1}\,dr\left(\int_{0}^{\infty}\left|u_n(rt)\right|^pt^{d-1+\be}\Phi(t)\,dt\right)^{\frac{q}{p}}.
\end{align*}
We start with the integral $K_1$. Since $\tfrac{q}{p}\leq 1$, it holds $(a+b)^{\frac{q}{p}}\leq a^{\frac{q}{p}}+b^{\frac{q}{p}}$ for $a,b\geq 0$. Hence, we have
\begin{align*}
K_1&\leq\int_{0}^{\frac{1}{n}}r^{q\de-1}\,dr\left(\int_{\frac{1}{nr}}^{\frac{n}{r}}\left(n^{\de}-(rt)^{-\de}\right)^pt^{d-1+\be}\Phi(t)\,dt\right)^{\frac{q}{p}}\\
&\quad+\int_{0}^{\frac{1}{n}}r^{q\de-1}\,dr\left(\int_{\frac{n}{r}}^{\infty}\left(n^{\de}-n^{-\de}\right)^pt^{d-1+\be}\Phi(t)\,dt\right)^{\frac{q}{p}}\\
&\lesssim n^{q\de}\int_{0}^{\frac{1}{n}}r^{q\de-1}\,dr\left(\int_{\frac{1}{nr}}^{\infty}\left(1-t^{-\de}\right)^pt^{d-1+\be}\Phi(t)\,dt\right)^{\frac{q}{p}}\\
&\quad+n^{q\de}\int_{0}^{\frac{1}{n}}r^{q\de-1}\,dr\left(\int_{\frac{n}{r}}^{\infty}t^{d-1+\be}\Phi(t)\,dt\right)^{\frac{q}{p}}\\
&\leq n^{q\de}\int_{0}^{\frac{1}{n}}r^{q\de-1}\,dr\left(\int_{1}^{\infty}\left(1-t^{-\de}\right)^pt^{d-1+\be}\Phi(t)\,dt\right)^{\frac{q}{p}}\\
&\quad +n^{q\de}\int_{0}^{\frac{1}{n}}r^{q\de-1}\,dr\left(\int_{2}^{\infty}t^{d-1+\be}\Phi(t)\,dt\right)^{\frac{q}{p}}=O(1).
\end{align*}
The last conclusion follows from the observation that, since $\Phi(\tfrac{1}{t})=t^{d+sp}\Phi(t)$, it holds $\Phi(t)\approx t^{-d-sp}$, $t\rightarrow\infty$, which gives $\int_{1}^{\infty}\left(1-t^{-\de}\right)^pt^{d-1+\be}\Phi(t)\,dt<\infty$ and $\int_{2}^{\infty}t^{d-1+\be}\Phi(t)\,dt<\infty$, thanks to the assumption $\be<sp$.

In a similar way, we will show that $K_3=O(1)$. We have
\begin{align*}
K_3&\leq\int_{n}^{\infty}r^{q\de-1}\,dr\left(\int_{0}^{\frac{1}{nr}}\left(n^{\de}-n^{-\de}\right)^{p}t^{d-1+\be}\Phi(t)\,dt\right)^{\frac{q}{p}}\\
&\quad+\int_{n}^{\infty}r^{q\de-1}\,dr\left(\int_{\frac{1}{nr}}^{\frac{n}{r}}\left((rt)^{-\de}-n^{-\de}\right)^{p}t^{d-1+\be}\Phi(t)\,dt\right)^{\frac{q}{p}}\\
&=:I_1+I_2.
\end{align*}
Denote $A=\sup_{t\in[0,\frac{1}{2}]}\Phi(t)$. By the assumptions $\be>-d$, we have 
\begin{align*}
I_1&\lesssim A^{\frac{q}{p}}n^{q\de}\int_{n}^{\infty}r^{q\de-1}\,dr\left(\int_{0}^{\frac{1}{nr}}t^{d+\be-1}\,dt\right)^{\frac{q}{p}}\approx n^{q\de-\frac{q}{p}(d+\be)}\int_{n}^{\infty}r^{q\de-\frac{q}{p}(d+\be)-1}\,dr\\
&\approx n^{2\left(q\de-\frac{q}{p}(d+\be)\right)}\longrightarrow 0,\quad\,n\rightarrow\infty,
\end{align*}
since $q\de-\frac{q}{p}(d+\be)=d\left(1-\tfrac{q}{p}\right)+\al-sq<0$. Therefore, $I_1\rightarrow 0$.

For $I_2$, substituting $r\mapsto nr$, we get
\begin{align*}
I_2&=\int_{1}^{\infty}r^{q\de-1}\,dr\left(\int_{\frac{1}{n^2 r}}^{\frac{1}{r}}\left((rt)^{-\de}-1\right)^pt^{d+\be-1}\Phi(t)\,dt\right)^{\frac{q}{p}}\\
&\leq\int_{1}^{\infty}\frac{dr}{r}\left(\int_{\frac{1}{n^2 r}}^{\frac{1}{r}}\left(t^{-\de}-1\right)^pt^{d+\be-1}\Phi(t)\,dt\right)^{\frac{q}{p}}\\
&\leq \int_{1}^{2}\frac{dr}{r}\left(\int_{0}^{1}\left(t^{-\de}-1\right)^pt^{d+\be-1}\Phi(t)\,dt\right)^{\frac{q}{p}}\\
&\quad+\int_{2}^{\infty}\frac{dr}{r}\left(\int_{0}^{\frac{1}{r}}\left(t^{-\de}-1\right)^pt^{d+\be-1}\Phi(t)\,dt\right)^{\frac{q}{p}}.
\end{align*}
The former integral above is clearly finite and for the latter, we have the bound
\begin{align*}
  \int_{2}^{\infty}&\frac{dr}{r}\left(\int_{0}^{\frac{1}{r}}\left(t^{-\de}-1\right)^pt^{d+\be-1}\Phi(t)\,dt\right)^{\frac{q}{p}}\lesssim\int_{2}^{\infty}\frac{dr}{r}\left(\int_{0}^{\frac{1}{r}}t^{d+\be-p\de-1}\,dt\right)^{\frac{q}{p}}\\
  &\lesssim\int_{2}^{\infty}\frac{dr}{r^{1+q(d+\be-p\de)/p}}<\infty,
\end{align*}
since $\frac{q}{p}(d+\be-p\de)=sq-\al-\left(1-\frac{q}{p}\right)d>0$. Overall, we obtain that $K_3=O(1),\,n\rightarrow\infty$. 

That leaves us with the integral $K_2$. We have
\begin{align*}
K_2&\leq \int_{\frac{1}{n}}^nr^{q\de-1}\,dr\left(\int_{0}^{\frac{1}{rn}}\left|r^{-\de}-n^{\de}\right|^pt^{d+\be-1}\Phi(t)\,dt\right)^{\frac{q}{p}}\\
&\quad+\int_{\frac{1}{n}}^nr^{q\de-1}\,dr\left(\int_{\frac{1}{rn}}^{\frac{n}{r}}\left|r^{-\de}-(rt)^{-\de}\right|^pt^{d+\be-1}\Phi(t)\,dt\right)^{\frac{q}{p}}\\
&\quad+\int_{\frac{1}{n}}^nr^{q\de-1}\,dr\left(\int_{{\frac{n}{r}}}^{\infty}\left|r^{-\de}-n^{-\de}\right|^pt^{d+\be-1}\Phi(t)\,dt\right)^{\frac{q}{p}}\\
&=:J_1+J_2+J_3.
\end{align*}
We want to show that $J_1+J_3=O(1),\,n\rightarrow\infty$. To this end, recall that, by \cite[(3.665)]{MR2360010},
\begin{equation}\label{2F1}
\Phi(t)=\om_{d-2}\text{B}\left(\tfrac{d-1}{2},\tfrac{1}{2}\right)\2F1\left(\tfrac{d+sp}{2},\tfrac{2+sp}{2},\tfrac{d}{2};t^2\right),
\end{equation}
where $\text{B}$ is the Beta function and $\2F1$ is the hypergeometric function. Hence, $\Phi$ is increasing on $[0,1)$. In consequence,
\begin{align*}
J_1&\leq\int_{\frac{1}{n}}^{n}r^{q\de-1}\left(n^{\de}-r^{-\de}\right)^{q}\Phi\left(\tfrac{1}{rn}\right)^{\frac{q}{p}}\left(\int_{0}^{\frac{1}{rn}}t^{d+\be-1}\,dt\right)^{\frac{q}{p}}\\
&\lesssim\int_{\frac{1}{n}}^{n}r^{q\de-1}\left(n^{\de}-r^{-\de}\right)^{q}\Phi\left(\tfrac{1}{rn}\right)^{\frac{q}{p}}(rn)^{-(d+\be)\frac{q}{p}}\,dr\\
&=\int_{\frac{1}{n^2}}^{1}u^{-q\de-1}\left(1-u^{\de}\right)^q\Phi(u)^{\frac{q}{p}}u^{(d+\be)\frac{q}{p}}\,du\\
&\lesssim \int_{\frac{1}{n^2}}^{\frac{1}{2}}u^{-q\de+\frac{q}{p}(d+\be)-1}\,du+\int_{\frac{1}{2}}^{1}\left(1-u^{\de}\right)^q\Phi(u)^{\frac{q}{p}}\,du.
\end{align*}
The first integral is finite for $n\rightarrow\infty$. For the second, we observe that by \eqref{2F1} and properties of the hypergeometric function, we have $\Phi(u)\approx (1-u)^{-1-sp}$, for $u\rightarrow 1^{-}$, as it was shown in \cite[beetween (3.4) and (3.5)]{MR2469027}. Since $1-u^{\de}\approx 1-u$, we have
$$
(1-u^{\de})^q\Phi(u)^{\frac{q}{p}}\approx \frac{1}{(1-u)^{\frac{q}{p}-q(1-s)}}
$$
and the latter is integrable for $u\rightarrow 1^-$, since $\frac{q}{p}-q(1-s)\leq1-q(1-s)<1$. In consequence, we obtain that $J_1=O(1)$. 

Dealing with the integral $J_3$, by substitution $r\mapsto rn$ and then $t\mapsto\frac{1}{t}$, we get
\begin{align*}
J_3&=\int_{\frac{1}{n^2}}^1r^{q\de-1}\left(r^{-\de}-1\right)^q\,dr\left(\int_{\frac{1}{r}}^{\infty}t^{d+\be-1}\Phi(t)\,dt\right)^{\frac{q}{p}}\\
&=\int_{\frac{1}{n^2}}^1r^{q\de-1}\left(r^{-\de}-1\right)^q\,dr\left(\int_{0}^{r}t^{-d-\be-1}\Phi\left(\tfrac{1}{t}\right)\,dt\right)^{\frac{q}{p}}\\
&=\int_{\frac{1}{n^2}}^1r^{q\de-1}\left(r^{-\de}-1\right)^q\,dr\left(\int_{0}^{r}t^{sp-\be-1}\Phi\left(t\right)\,dt\right)^{\frac{q}{p}}\\
&\leq\int_{\frac{1}{n^2}}^1r^{q\de-1}\left(r^{-\de}-1\right)^q\Phi\left(r\right)^{\frac{q}{p}}\,dr\left(\int_{0}^{r}t^{sp-\be-1}\,dt\right)^{\frac{q}{p}}\\
&\lesssim\int_{\frac{1}{n^2}}^1r^{q\de+\frac{q}{p}(sp-\be)-1}\left(r^{-\de}-1\right)^q\Phi\left(r\right)^{\frac{q}{p}}\,dr\\
&\lesssim\int_{\frac{1}{n^2}}^{\frac{1}{2}}r^{\frac{q}{p}(sp-\be)-1}\,dr+\int_{\frac{1}{2}}^{1}\left(1-r^{\de}\right)^q\Phi\left(r\right)^{\frac{q}{p}}\,dr=O(1),
\end{align*}
where we used the assumption $\be<sp$.

Now, the last integral left to evaluate is $J_2$. But this case is the simplest, as we have,
\begin{align*}
J_2&=\int_{\frac{1}{n}}^{n}\frac{dr}{r} \left(\int_{\frac{1}{rn}}^{\frac{n}{r}}\left|1-t^{-\de}\right|^pt^{d+\be-1}\Phi(t)\,dt\right)^{\frac{q}{p}}\\
&\leq\int_{\frac{1}{n}}^{n}\frac{dr}{r} \left(\int_{0}^{\infty}\left|1-t^{-\de}\right|^pt^{d+\be-1}\Phi(t)\,dt\right)^{\frac{q}{p}}\\
&=2\,\mathcal{C}^{p,q}_{d,s,\al,\be}\log n.
\end{align*}
Hence, summarizing all the computations, we showed that 
$$
K_1+K_2+K_3=2\,\mathcal{C}^{p,q}_{d,s,\al,\be}\log n+O(1),\quad n\rightarrow\infty.
$$ 
Thus, recalling \eqref{log},
\begin{align*}
\frac{\displaystyle\int_{\R^d}\left(\int_{\R^d}\frac{|u_n(x)-u_n(y)|^p}{|x-y|^{d+sp}}|y|^{\be}\,dy\right)^{\frac{q}{p}}|x|^{\al}\,dx}{\displaystyle\int_{\R^d}\frac{|u_n(x)|^q}{|x|^{sq-\al-q\be/p}}\,dx}=\frac{2\,\mathcal{C}^{p,q}_{d,s,\al,\be}\log n+O(1)}{2\log n+O(1)}\longrightarrow \mathcal{C}^{p,q}_{d,s,\al,\be},
\end{align*}
as $n$ tends to infinity. Consequently, the constant $\mathcal{C}^{p,q}_{d,s,\al,\be}$ is indeed sharp in the inequality \eqref{HardyTriebelLizorkin}, for the case $d+\al+\frac{q\be}{p}s-sq>0$.

We now turn to the case, when $\de=d+\al+\frac{q\be}{p}-sq<0$. Recall that the inversion $x\mapsto T(x)=\tfrac{x}{|x|^2}$ has Jacobian equal to $(-1)^d|x|^{-2d}$ and satisfies $|T(x)-T(y)|=\tfrac{|x-y|}{|x||y|}$. Thus, substituting $x\mapsto T(x)$, $y\mapsto T(y)$, we have
\begin{align*}
\int_{\R^d}&\left(\int_{\R^d}\frac{|u(x)-u(y)|^p}{|x-y|^{d+sp}}|y|^{\be}\,dy\right)^{\frac{q}{p}}|x|^{\al}\,dx\\
&=\int_{\R^d}\left(\int_{\R^d}\frac{|u(T(x))-u(T(y))|^p}{|x-y|^{d+sp}}|y|^{\be'}\,dy\right)^{\frac{q}{p}}|x|^{\al'}\,dx
\end{align*}
and
$$
\int_{\R^d}\frac{|u(x)|^q}{|x|^{sq-\al-\frac{q\be}{p}}}\,dx=\int_{\R^d}\frac{|u(T(x))|^q}{|x|^{sq-\al'-\frac{q\be'}{p}}}\,dx
$$
where $\al'=\tfrac{qd}{p}+sq-2d-\al$ and $\be'=sp-d-\be$. We now observe that $\al'$, $\be'$ satisfy \eqref{assumption1},
$$
\de'=d+\al'+\frac{q\be'}{p}-sq=-d-\al-\frac{q\be}{p}+sq=-\de>0,
$$
and $\mathcal{C}^{p,q}_{d,s,\al,\be}=\mathcal{C}^{p,q}_{d,s,\al',\be'}$. Thus, we may reduce the case of $\de<0$ to the previous situation. The proof is now complete.
\end{proof}
\begin{proof}[Proof of Theorem \ref{thm2}]
  We will present the proof for $d=1$ separately, as it turns out to be significantly simpler. Therefore, let us treat the case $d=1$. Substituting $y\mapsto xy$ in the inner integral, we have
\begin{align*}
\int_{0}^{\infty}\left(\int_{0}^{\infty}\frac{|u(x)-u(y)|^p}{|x-y|^{1+sp}}y^{\be}\,dy\right)^{\frac{q}{p}}x^\al\,dx&=\int_{0}^{\infty}x^{\ga-1}\left(\int_{0}^{\infty}\frac{|u(x)-u(xy)|^p}{|1-y|^{1+sp}}y^{\be}\,dy\right)^{\frac{q}{p}}dx,
\end{align*}  
where $\ga=1+\al+q\be/p-sq$. By the Minkowski's integral inequality,
\begin{align*}
    \int_{0}^{\infty}x^{\ga-1}&\left(\int_{0}^{\infty}\frac{|u(x)-u(xy)|^p}{|1-y|^{1+sp}}y^{\be}\,dy\right)^{\frac{q}{p}}dx\\
    &\geq\left(\int_{0}^{\infty}\frac{y^{\be}}{|1-y|^{1+sp}}\left(\int_{0}^{\infty}\left|u(x)-u(xy)\right|^qx^{\ga-1}\,dx\right)^{\frac{p}{q}}dy\right)^{\frac{q}{p}}.
\end{align*}
Applying Minkowski's inequality, similarly as in the proof of Theorem \ref{thm1} we can show that, 
\begin{align*}
 \int_{0}^{\infty}\left|u(x)-u(xy)\right|^qx^{\ga-1}\,dx\geq\left|1-y^{-\ga/q}\right|^{q}\int_{0}^{\infty}|u(x)|^qx^{\ga-1}\,dx.   
\end{align*}
Consequently, \eqref{half-space} for $d=1$ holds true.

Let now $d\geq 2$. We adapt the convention $x=(x',x_d)$, with $x'\in\R^{d-1}$, $x_d>0$ and we write $u(x)=u(x',x_d)$. We than have
\begin{align}\label{U}
&\int_{\R^d_+}\left(\int_{\R^d_+}\frac{|u(x)-u(y)|^p}{|x-y|^{d+sp}}y_d^{\be}\,dy\right)^{\frac{q}{p}}x_d^{\al}\,dx\\
\nonumber&\quad=\int_{0}^{\infty}\int_{\R^{d-1}}x_d^{\al}\,dx'\,dx_d\left(\int_{0}^{\infty}\int_{\R^{d-1}}\frac{|u(x',x_d)-u(y',y_d)|^p}{\left(|x'-y'|^2+(x_d-y_d)^2\right)^{\frac{d+sp}{2}}}y_d^{\be}\,dy'\,dy_d\right)^{\frac{q}{p}}\\
\nonumber&\quad=\int_{0}^{\infty}\int_{\R^{d-1}}x_d^{\al+\frac{q(\be+1)}{p}}\,dx'\,dx_d\left(\int_{0}^{\infty}\int_{\R^{d-1}}\frac{|u(x',x_d)-u(y',x_dy_d)|^p}{\left(|x'-y'|^2+x_d^2(1-y_d)^2\right)^{\frac{d+sp}{2}}}y_d^{\be}\,dy'\,dy_d\right)^{\frac{q}{p}},
\end{align}
where the last equality follows from substituting $y_d\mapsto x_dy_d$ in the inner integral. By Hölder's inequality and \cite[(6.2.1)]{MR1225604}, we have
\begin{align*}
 &\int_{\R^{d-1}}\frac{|u(x',x_d)-u(y',x_dy_d)|^p}{\left(|x'-y'|^2+x_d^2(1-y_d)^2\right)^{\frac{d+sp}{2}}}\,dy'\\
 &\geq\left(\int_{\R^{d-1}}\frac{|u(x',x_d)-u(y',x_dy_d)|^q}{\left(|x'-y'|^2+x_d^2(1-y_d)^2\right)^{\frac{d+sp}{2}}}\,dy'\right)^{\frac{p}{q}}\left(\int_{\R^{d-1}}\frac{dy'}{\left(|x'-y'|^2+x_d^2(1-y_d)^2\right)^{\frac{d+sp}{2}}}\right)^{1-\frac{p}{q}}\\
 &=\mathcal{D}^{1-\frac{p}{q}}\left(x_d\left|1-y_d\right|\right)^{(1+sp)(\frac{p}{q}-1)}\left(\int_{\R^{d-1}}\frac{|u(x',x_d)-u(y',x_dy_d)|^q}{\left(|x'-y'|^2+x_d^2(1-y_d)^2\right)^{\frac{d+sp}{2}}}\,dy'\right)^{\frac{p}{q}},\\
\end{align*}
where the constant $\mathcal{D}$ is given by
\begin{equation}\label{C}
\mathcal{D}=\mathcal{D}(d,s,p)=\frac{\pi^{\frac{d-1}{2}}\Gamma\left(\frac{1+sp}{2}\right)}{\Gamma\left(\frac{d+sp}{2}\right)}.
\end{equation}
Therefore, applying Minkowski's integral inequality we obtain that the right-hand side of \eqref{U} is bigger or equal to
\begin{align*}
 &\mathcal{D}^{\frac{q}{p}-1}\int_{0}^{\infty}\int_{\R^{d-1}}x_d^{\al+\frac{q\be}{p}+1+(p-q)s}\,dx'\,dx_d\\
 &\quad\times\left(\int_{0}^{\infty}y_d^{\be}\left|1-y_d\right|^{(1+sp)(\frac{p}{q}-1)}\left(\int_{\R^{d-1}}\frac{|u(x',x_d)-u(y',x_dy_d)|^q}{\left(|x'-y'|^2+x_d^2(1-y_d)^2\right)^{\frac{d+sp}{2}}}\,dy'\right)^{\frac{p}{q}}dy_d\right)^{\frac{q}{p}}\\
 &\geq \mathcal{D}^{\frac{q}{p}-1}\Bigg[\int_{0}^{\infty}y_d^{\be}\left|1-y_d\right|^{(1+sp)(\frac{p}{q}-1)}\,dy_d\\
 &\quad\times\left(\int_{0}^{\infty}x_d^{\al+\frac{q\be}{p}+1+(p-q)s}\,dx_d\int_{\R^{d-1}}\int_{\R^{d-1}}\frac{|u(x',x_d)-u(y',x_dy_d)|^q}{\left(|x'-y'|^2+x_d^2(1-y_d)^2\right)^{\frac{d+sp}{2}}}\,dy'\,dx'\right)^{\frac{p}{q}}\Bigg]^{\frac{q}{p}}.
\end{align*}
Substituting $x'\mapsto x_dx'$, $y'\mapsto x_dy'$ we have
\begin{align*}
 &\int_{0}^{\infty}x_d^{\al+\frac{2\be}{p}+1+(p-q)s}\,dx_d\int_{\R^{d-1}}\int_{\R^{d-1}}\frac{\left|u(x',x_d)-u(y',x_dy_d)\right|^q}{\left(|x'-y'|^2+x_d^2(1-y_d)^2\right)^{\frac{d+sp}{2}}}\,dy'\,dx'\\
 &=\int_{0}^{\infty}\int_{\R^{d-1}}\int_{\R^{d-1}}x_d^{\ga}\frac{\left|u(x'x_d,x_d)-u(y'x_d,x_dy_d)\right|^q}{\left(|x'-y'|^2+(1-y_d)^2\right)^{\frac{d+sp}{2}}}\,dy'\,dx'\,dx_d,
\end{align*}
with $\ga=\al+\tfrac{q\be}{p}+d-1-sq$. 

Moreover, observe that
\begin{align*}
\int_{\R^{d-1}}&\int_{\R^{d-1}}\int_{0}^{\infty}\frac{\left|u(x'x_d,x_d)\right|^qx_d^{\ga}}{\left(\left|x'-y'\right|^2+(1-y_d)^2\right)^{\frac{d+sp}{2}}}\,dx_d\,dy'\,dx'\\
&=\int_{\R^{d-1}}\int_{0}^{\infty}\left|u(x'x_d,x_d)\right|^qx_d^{\ga}\,dx_d\,dx'\int_{\R^{d-1}}\frac{dy'}{\left(\left|x'-y'\right|^2+(1-y_d)^2\right)^{\frac{d+sp}{2}}}\\
&=\mathcal{D}\left|1-y_d\right|^{-1-sp}\int_{\R^{d-1}}\int_{0}^{\infty}\left|u(x'x_d,x_d)\right|^qx_d^{\ga}\,dx_d\,dx'\\
&=\mathcal{D}\left|1-y_d\right|^{-1-sp}\int_{\R^{d-1}}\int_{0}^{\infty}\left|u(x',x_d)\right|^qx_d^{\ga-d+1}\,dx_d\,dx'\\
&=\mathcal{D}\left|1-y_d\right|^{-1-sp}\int_{\R^{d}_+}\frac{|u(x)|^q}{x_d^{sq-\al-\frac{q\be}{p}}}\,dx,
\end{align*}
with $\mathcal{D}$ as in \eqref{C}. In the same manner,
\begin{align*}
\int_{\R^{d-1}}&\int_{\R^{d-1}}\int_{0}^{\infty}\frac{\left|u(y'x_d,x_dy_d)\right|^qx_d^{\ga}}{\left(\left|x'-y'\right|^2+(1-y_d)^2\right)^{\frac{d+sp}{2}}}\,dx_d\,dy'\,dx'\\
&=y_{d}^{-(\ga+1)}\int_{\R^{d-1}}\int_{\R^{d-1}}\int_{0}^{\infty}\frac{\left|u\left(\tfrac{y'x_d}{y_d},x_d\right)\right|^qx_d^{\ga}}{\left(\left|x'-y'\right|^2+(1-y_d)^2\right)^{\frac{d+sp}{2}}}\,dx_d\,dy'\,dx'\\
&=\mathcal{D}y_{d}^{-(\ga+1)}\left|1-y_d\right|^{-1-sp}\int_{\R^{d-1}}\int_{0}^{\infty}\left|u\left(\tfrac{y'x_d}{y_d},x_d\right)\right|^qx_d^{\ga}\,dx_d\,dy'\\
&=\mathcal{D}y_d^{d-2-\ga}\left|1-y_d\right|^{-1-sp}\int_{\R^{d}_+}\frac{\left|u(x)\right|^q}{x_d^{sq-\al-\frac{q\be}{p}}}\,dx.
\end{align*}
We now apply Minkowski's inequality to get
\begin{align*}
&\left(\int_{\R^{d-1}}\int_{\R^{d-1}}\int_{0}^{\infty}\frac{\left|u(x'x_d,x_d)\right|^qx_d^{\ga}}{\left(\left|x'-y'\right|^2+(1-y_d)^2\right)^{\frac{d+sp}{2}}}\,dx_d\,dy'\,dx'\right)^{1/q}\\
&\quad\leq\left(\int_{\R^{d-1}}\int_{\R^{d-1}}\int_{0}^{\infty}\frac{\left|u(x'x_d,x_d-u(y'x_d,x_dy_d)\right|^qx_d^{\ga}}{\left(\left|x'-y'\right|^2+(1-y_d)^2\right)^{\frac{d+sp}{2}}}\,dx_d\,dy'\,dx'\right)^{1/q}\\
&\quad+\left(\int_{\R^{d-1}}\int_{\R^{d-1}}\int_{0}^{\infty}\frac{\left|u(y'x_d,x_dy_d)\right|^qx_d^{\ga}}{\left(\left|x'-y'\right|^2+(1-y_d)^2\right)^{\frac{d+sp}{2}}}\,dx_d\,dy'\,dx'\right)^{1/q}\\
&\quad=\left(\int_{\R^{d-1}}\int_{\R^{d-1}}\int_{0}^{\infty}\frac{\left|u(x'x_d,x_d-u(y'x_d,x_dy_d)\right|^qx_d^{\ga}}{\left(\left|x'-y'\right|^2+(1-y_d)^2\right)^{\frac{d+sp}{2}}}\,dx_d\,dy'\,dx'\right)^{1/q}\\
&\quad+\left(\mathcal{D}|1-y_d|^{-1-sp}y_d^{d-2-\ga}\int_{\R^{d}_+}\frac{|u(x)|^q}{x_d^{sq-\al-\frac{q\be}{p}}}\,dx\right)^{1/q}.
\end{align*}
Repeating similar calculations as above for 
$$
\left(\int_{\R^{d-1}}\int_{\R^{d-1}}\int_{0}^{\infty}\frac{\left|u(y'x_d,x_dy_d)\right|^qx_d^{\ga}}{\left(\left|x'-y'\right|^2+(1-y_d)^2\right)^{\frac{d+sp}{2}}}\,dx_d\,dy'\,dx'\right)^{1/q}
$$
on the left-hand side, we can show that the obtained estimates yield
\begin{align*}
&\int_{0}^{\infty}\int_{\R^{d-1}}\int_{\R^{d-1}}x_d^{\ga}\frac{\left|u(x'x_d,x_d)-u(y'x_d,x_dy_d)\right|^q}{\left(|x'-y'|^2+(1-y_d)^2\right)^{\frac{d+sp}{2}}}\,dy'\,dx'\,dx_d\\
&\quad\geq \mathcal{D}\left|1-y_d\right|^{-1-sp}\left|1-y_d^{(d-2-\ga)/q}\right|^q\int_{\R^{d}_+}\frac{|u(x)|^q}{x_d^{sq-\al-\frac{q\be}{p}}}\,dx.
\end{align*}
Thus, easy rearranging ends the proof.

The last part of the proof is to justify optimality of the constant \eqref{DTriebelLizorkin}. To this end, as usually in the setting of Hardy-type inequalities on half-spaces, we will reduce the multidimensional case to the one-dimensional problem. More precisely, for $d=1$, sharpness of the constant $\mathcal{D}^{p,q}_{1,s,\al,\be}$ can be shown exactly like in the proof of Theorem \ref{thm1}, with minor modifications, taking the same test functions $u_n$ as in \eqref{u_n}. For $d\geq 2$, we let $\varphi\in C_c^{1}(\R^{d-1})$ and, for $N>0$, we define
$$
\varphi_N(x')=\frac{N^{\frac{1-d}{q}}}{\|\varphi\|_{L^q(\R^{d-1})}}\varphi\left(\frac{x'}{N}\right).
$$
For $x\in\R^d_+$ and $\eta\in C_c([0,\infty))$ if $sq-\al-\tfrac{q\be}{p}<1$ or $\eta\in C_c((0,\infty))$ if $sq-\al-\tfrac{q\be}{p}>1$, let now $\psi_N(x)=\varphi_N(x')\eta(x_d)$. Obviously, 
$$
\int_{\R^{d}_+}\frac{|\psi_N(x)|^q}{x_d^{sq-\al-q\be/p}}\,dx=\int_{0}^{\infty}\frac{|\eta(x_d)|^q}{x_d^{sq-\al-q\be/p}}\,dx_d.
$$
Moreover, since  
$$
\left|\psi_N(x)-\psi_N(y)\right|\leq\left|\varphi_N(x')\right|\left|\eta(x_d)-\eta(y_d)\right|+\left|\eta(y_d)\right|\left|\varphi_N(x')-\varphi_N(y')\right|,
$$
by Minkowski's inequality (applied twice) it holds
\begin{align*}
&\left(\int_{\R^d_+}\left(\int_{\R^d_+}\frac{|\psi_N(x)-\psi_N(y)|^p}{|x-y|^{d+sp}}y_d^{\be}\,dy\right)^{\frac{q}{p}}x_d^{\al}\,dx\right)^{\frac{1}{q}}\\
&\quad\quad\leq\left(\int_{\R^d_+}\left(\int_{\R^d_+}\frac{\left|\varphi_N(x')\right|^p\left|\eta(x_d)-\eta(y_d)\right|^p}{|x-y|^{d+sp}}y_d^{\be}\,dy\right)^{\frac{q}{p}}x_d^{\al}\,dx\right)^{\frac{1}{q}}\\
&\quad\quad+\left(\int_{\R^d_+}\left(\int_{\R^d_+}\frac{\left|\eta(y_d)\right|^p\left|\varphi_N(x')-\varphi_N(y')\right|^p}{|x-y|^{d+sp}}y_d^{\be}\,dy\right)^{\frac{q}{p}}x_d^{\al}\,dx\right)^{\frac{1}{q}}\\
&\quad\quad=:I_1+I_2.
\end{align*}
We have
\begin{equation}\label{I1}
I_1=\mathcal{D}^{\frac{1}{p}}\left(\int_{0}^{\infty}\left(\int_{0}^{\infty}\frac{\left|\eta(x_d)-\eta(y_d)\right|^p}{\left|x_d-y_d\right|^{1+sp}}y_d^{\be}\,dy_d\right)^{\frac{q}{p}}x_d^{\al}\,dx_d\right)^{\frac{1}{q}}. 
\end{equation}
Next, we will show that $I_2\rightarrow 0$, as $N\rightarrow\infty$. To this end, we estimate
\begin{align*}
I_2&\leq \left(\int_{\R^d_+}\left(\int_{\R^d_+}\frac{\left|\eta(y_d)-\eta(x_d)\right|^p\left|\varphi_N(x')-\varphi_N(y')\right|^p}{|x-y|^{d+sp}}y_d^{\be}\,dy\right)^{\frac{q}{p}}x_d^{\al}\,dx\right)^{\frac{1}{q}}\\
&+\left(\int_{\R^d_+}\left(\int_{\R^d_+}\frac{\left|\eta(x_d)\right|^p\left|\varphi_N(x')-\varphi_N(y')\right|^p}{|x-y|^{d+sp}}y_d^{\be}\,dy\right)^{\frac{q}{p}}x_d^{\al}\,dx\right)^{\frac{1}{q}}
\\&=:J_1+J_2.
\end{align*}
Observe that, by Young's inequality, for sufficiently small $\varepsilon>0$,
$$
|x-y|^{d+sp}\gtrsim|x'-y'|^{d-1+\varepsilon}|x_d-y_d|^{1+sp-\varepsilon}. 
$$
Consequently,
\begin{align*}
J_1^q&\lesssim  \int_{\R^d_+}\left(\int_{\R^d_+}\frac{\left|\eta(y_d)-\eta(x_d)\right|^p\left|\varphi_N(x')-\varphi_N(y')\right|^p}{|x'-y'|^{d-1+\varepsilon}|x_d-y_d|^{1+sp-\varepsilon}}y_d^{\be}\,dy\right)^{\frac{q}{p}}x_d^{\al}\,dx\\
&=\int_{\R^{d-1}}\left(\int_{\R^{d-1}}\frac{\left|\varphi_N(x')-\varphi_N(y')\right|^p}{|x'-y'|^{d-1+\varepsilon}}\,dy'\right)^{\frac{q}{p}}dx'\\
&\quad\times\int_{0}^{\infty}\left(\int_{0}^{\infty}\frac{|\eta(x_d)-\eta(y_d)|^p}{|x_d-y_d|^{1+sp-\varepsilon}}y_d^{\be}\,dy_d\right)^{\frac{q}{p}}x_d^{\al}\,dx_d.
\end{align*}
Clearly, it holds
\begin{align*}
 \int_{\R^{d-1}}\left(\int_{\R^{d-1}}\frac{\left|\varphi_N(x')-\varphi_N(y')\right|^p}{|x'-y'|^{d-1+\varepsilon}}\,dy'\right)^{\frac{q}{p}}dx'=\frac{1}{N^{q\varepsilon/p}}\int_{\R^{d-1}}\left(\int_{\R^{d-1}}\frac{\left|\varphi(x')-\varphi(y')\right|^p}{|x'-y'|^{d-1+\varepsilon}}\,dy'\right)^{\frac{q}{p}}dx'.
\end{align*}
If $\varepsilon$ is sufficiently small, by Lemma \ref{Lemma}, both Triebel--Lizorkin seminorms above are finite, hence, due to the factor $N^{-q\varepsilon/p}$ in front, the integral $J_1$ tends to zero, as $N\rightarrow\infty$.

Finally, we have 
$$
J_2\leq\left(\int_{\R^d}\left(\int_{\R^d}\frac{\left|\varphi_N(x')-\varphi_N(y')\right|^p}{|x-y|^{d+sp}}|y_d|^{\be}\,dy\right)^{\frac{q}{p}}\left|\eta(|x_d|)\right|^q|x_d|^{\al}\,dx\right)^{\frac{1}{q}}
$$
Now, using exactly the same arguments as in \cite[Proof of Theorem 1.1]{Vivek}, it may be shown that $J_2\rightarrow 0$, as $N\rightarrow\infty$. The proof uses, among others, the fact that the function $w(y_d)=|y_d|^{\be}$ is a Muckenhoupt $A_1$ weight for $\be\in(-1,0]$, we refer again to\cite[Theorem 1.1]{MR3900847}. We omit the details.

Overall, since 
$$
\frac{\mathcal{D}^{p,q}_{d,s,\al,\be}}{\mathcal{D}^{p,q}_{1,s,\al,\be}}=\mathcal{D}^{\frac{q}{p}},
$$
sharpness of the constant $\mathcal{D}^{p,q}_{d,s,\al,\be}$ for $d\geq 2$ follows now from \eqref{I1} and sharpness of the constant $\mathcal{D}^{p,q}_{1,s,\al,\be}$. Hence, the proof is complete.
\end{proof}

%\bibliographystyle{acm}
%\bibliography{bib_file_mega}

\begin{thebibliography}{10}

\bibitem{MR3626031}
{\sc Abdellaoui, B., and Bentifour, R.}
\newblock Caffarelli-{K}ohn-{N}irenberg type inequalities of fractional order with applications.
\newblock {\em J. Funct. Anal. 272}, 10 (2017), 3998--4029.

\bibitem{MR1225604}
{\sc Abramowitz, M., and Stegun, I.~A.}, Eds.
\newblock {\em Handbook of mathematical functions with formulas, graphs, and mathematical tables}.
\newblock Dover Publications, Inc., New York, 1992.
\newblock Reprint of the 1972 edition.

\bibitem{MR1254832}
{\sc Beckner, W.}
\newblock Pitt's inequality and the uncertainty principle.
\newblock {\em Proc. Amer. Math. Soc. 123}, 6 (1995), 1897--1905.

\bibitem{MR4800921}
{\sc Bianchi, F., Brasco, L., and Zagati, A.~C.}
\newblock On the sharp {H}ardy inequality in {S}obolev-{S}lobodecki\u i\ spaces.
\newblock {\em Math. Ann. 390}, 1 (2024), 493--555.

\bibitem{MR2663757}
{\sc Bogdan, K., and Dyda, B.}
\newblock The best constant in a fractional {H}ardy inequality.
\newblock {\em Math. Nachr. 284}, 5-6 (2011), 629--638.

\bibitem{MR2944369}
{\sc Di~Nezza, E., Palatucci, G., and Valdinoci, E.}
\newblock Hitchhiker's guide to the fractional {S}obolev spaces.
\newblock {\em Bull. Sci. Math. 136}, 5 (2012), 521--573.

\bibitem{MR3420496}
{\sc Dipierro, S., and Valdinoci, E.}
\newblock A density property for fractional weighted {S}obolev spaces.
\newblock {\em Atti Accad. Naz. Lincei Rend. Lincei Mat. Appl. 26}, 4 (2015), 397--422.

\bibitem{MR3900847}
{\sc Dyda, B., Ihnatsyeva, L., Lehrb\"ack, J., Tuominen, H., and V\"ah\"akangas, A.~V.}
\newblock Muckenhoupt {$A_p$}-properties of distance functions and applications to {H}ardy-{S}obolev--type inequalities.
\newblock {\em Potential Anal. 50}, 1 (2019), 83--105.

\bibitem{MR4708667}
{\sc Dyda, B., and Kijaczko, M.}
\newblock Sharp fractional {H}ardy inequalities with a remainder for {$1 < p < 2$}.
\newblock {\em J. Funct. Anal. 286}, 9 (2024), Paper No. 110373, 19.

\bibitem{MR4705882}
{\sc Dyda, B., and Kijaczko, M.}
\newblock Sharp weighted fractional {H}ardy inequalities.
\newblock {\em Studia Math. 274}, 2 (2024), 153--171.

\bibitem{MR2469027}
{\sc Frank, R.~L., and Seiringer, R.}
\newblock Non-linear ground state representations and sharp {H}ardy inequalities.
\newblock {\em J. Funct. Anal. 255}, 12 (2008), 3407--3430.

\bibitem{MR2723817}
{\sc Frank, R.~L., and Seiringer, R.}
\newblock Sharp fractional {H}ardy inequalities in half-spaces.
\newblock In {\em Around the research of {V}ladimir {M}az'ya. {I}}, vol.~11 of {\em Int. Math. Ser. (N. Y.)}. Springer, New York, 2010, pp.~161--167.

\bibitem{MR2360010}
{\sc Gradshteyn, I.~S., and Ryzhik, I.~M.}
\newblock {\em Table of integrals, series, and products}, seventh~ed.
\newblock Elsevier/Academic Press, Amsterdam, 2007.

\bibitem{MR312232}
{\sc Hedberg, L.~I.}
\newblock On certain convolution inequalities.
\newblock {\em Proc. Amer. Math. Soc. 36\/} (1972), 505--510.

\bibitem{MR1731740}
{\sc Heinig, H.~P., Kufner, A., and Persson, L.-E.}
\newblock On some fractional order {H}ardy inequalities.
\newblock {\em J. Inequal. Appl. 1}, 1 (1997), 25--46.

\bibitem{MR436854}
{\sc Herbst, I.~W.}
\newblock Spectral theory of the operator {$(p\sp{2}+m\sp{2})\sp{1/2}-Ze\sp{2}/r$}.
\newblock {\em Comm. Math. Phys. 53}, 3 (1977), 285--294.

\bibitem{MR4961027}
{\sc Hu, P., Li, Y., Yang, D., and Yuan, W.}
\newblock A sharp localized weighted inequality related to {G}agliardo and {S}obolev seminorms and its applications.
\newblock {\em Adv. Math. 481\/} (2025), Paper No. 110537, 66.

\bibitem{MR3205532}
{\sc Ihnatsyeva, L., and V\"ah\"akangas, A.~V.}
\newblock Hardy inequalities in {T}riebel-{L}izorkin spaces.
\newblock {\em Indiana Univ. Math. J. 62}, 6 (2013), 1785--1807.

\bibitem{MR3322431}
{\sc Ihnatsyeva, L., and V\"ah\"akangas, A.~V.}
\newblock Hardy inequalities in {T}riebel-{L}izorkin spaces {II}. {A}ikawa dimension.
\newblock {\em Ann. Mat. Pura Appl. (4) 194}, 2 (2015), 479--493.

\bibitem{MR4804831}
{\sc Kassymov, A., Ruzhansky, M., and Suragan, D.}
\newblock Fractional {H}ardy-type inequalities on homogeneous {L}ie groups in the case {$Q<sp$}.
\newblock {\em Illinois J. Math. 68}, 3 (2024), 415--431.

\bibitem{MR4675974}
{\sc Kijaczko, M.}
\newblock Fractional {S}obolev spaces with power weights.
\newblock {\em Ann. Sc. Norm. Super. Pisa Cl. Sci. (5) 24}, 3 (2023), 1741--1764.

\bibitem{MR4932272}
{\sc Kijaczko, M.}
\newblock Asymptotics of weighted {G}agliardo seminorms.
\newblock {\em Ann. Mat. Pura Appl. (4) 204}, 4 (2025), 1715--1728.

\bibitem{MR4720167}
{\sc Kijaczko, M., and Lenczewska, J.}
\newblock Sharp {H}ardy inequalities for {S}obolev-{B}regman forms.
\newblock {\em Math. Nachr. 297}, 2 (2024), 549--559.

\bibitem{Vivek}
{\sc Kijaczko, M., and Sahu, V.}
\newblock Weighted fractional {H}ardy-{S}obolev and {H}ardy-{S}obolev-{M}az'ya inequalities with singularities on flat submanifold., 2025.
\newblock arXiv e-prints.

\bibitem{MR1299571}
{\sc Kufner, A., and Persson, L.~E.}
\newblock Hardy inequalities of fractional order via interpolation.
\newblock In {\em Inequalities and applications}, vol.~3 of {\em World Sci. Ser. Appl. Anal.} World Sci. Publ., River Edge, NJ, 1994, pp.~417--430.

\bibitem{MR1940355}
{\sc Maz'ya, V., and Shaposhnikova, T.}
\newblock On the {B}ourgain, {B}rezis, and {M}ironescu theorem concerning limiting embeddings of fractional {S}obolev spaces.
\newblock {\em J. Funct. Anal. 195}, 2 (2002), 230--238.

\bibitem{MR4686398}
{\sc Mohanta, K.}
\newblock Bourgain-{B}rezis-{M}ironescu formula for {$W^{s,p}_q$}-spaces in arbitrary domains.
\newblock {\em Calc. Var. Partial Differential Equations 63}, 2 (2024), Paper No. 31, 17.

\bibitem{MR3667439}
{\sc Prats, M., and Saksman, E.}
\newblock A {${\rm T}(1)$} theorem for fractional {S}obolev spaces on domains.
\newblock {\em J. Geom. Anal. 27}, 3 (2017), 2490--2538.

\bibitem{MR4849884}
{\sc Sahu, V.}
\newblock Weighted fractional {H}ardy inequalities with singularity on any flat submanifold.
\newblock {\em J. Math. Anal. Appl. 546}, 2 (2025), Paper No. 129227, 16.

\bibitem{MR1717839}
{\sc Yafaev, D.}
\newblock Sharp constants in the {H}ardy-{R}ellich inequalities.
\newblock {\em J. Funct. Anal. 168}, 1 (1999), 121--144.

\end{thebibliography}
\end{document}